\newtheorem{theorem}{\bf Theorem}[section]
\newtheorem{proposition}[theorem]{\bf Proposition}
\newcommand{\qed}{\hfill $\square$ \bigskip}
\def\big{\bigskip }
\begin{document}
\baselineskip=0.28in

\vspace*{40mm}

\begin{center}
{\LARGE \bf  The edge-Hosoya polynomial of benzenoid chains}
\bigskip \bigskip

{\large \sc Niko Tratnik$^{a}$, Petra \v Zigert Pleter\v sek$^{a,b}$}

\smallskip
{\em  $^a$Faculty of Natural Sciences and Mathematics, University of Maribor,  Slovenia} \\
{\em $^b$Faculty of Chemistry and Chemical Engineering, University of Maribor, Slovenia} \\

e-mail: {\tt niko.tratnik@um.si, \tt petra.zigert@um.si}

\bigskip\medskip

  (Received \today) 

\end{center}

\vspace{3mm}\noindent

\begin{abstract}
The Hosoya polynomial is a well known vertex-distance based polynomial, closely correlated to the Wiener index and the hyper-Wiener index, which are widely used molecular-structure descriptors. In the present paper we consider the edge version of the Hosoya polynomial. For a connected graph $G$ let $d_e(G,k)$ be the number of (unordered) 
edge pairs at distance $k$\,. Then the edge-Hosoya polynomial of $G$ 
is $H_e(G,x) = \sum_{k \geq 0} d(G,k)\,x^k$\,. We investigate the edge-Hosoya polynomial of important chemical graphs known as benzenoid chains and derive the recurrence relations for them. These recurrences are then solved for linear benzenoid chains, which are also called polyacenes. 

 \end{abstract}

\baselineskip=0.30in

\big
\section{Introduction}
In 1988 H. Hosoya introduced some counting polynomials in chemistry \cite{hosoya}, among them the Wiener polynomial, nowadays known as the {\it Hosoya polynomial}. Since then the topic has been intensively researched - for example, the Hosoya polynomial of benzenoid chains \cite{gut,xu-zh}, carbon nanotubes \cite{zig-zag}, circumcoronene series \cite{circum}, Finonacci and Lucas cubes \cite{klav} has been calculated. For some recent results on the Hosoya polynomial see \cite{deu}. What makes the Hosoya polynomial interesting in chemistry is especially the strong connection to the Wiener index and the hyper-Wiener index \cite{cash}.
\bigskip

\noindent
The Hosoya polynomial is based on the distances between pairs of vertices in a graph, and similar concept has been introduced in \cite{behm} for distances between pairs of edges under the name the edge-Hosoya polynomial. The authors defined the distance between two edges $e=ab$ and $f =xy$ of a graph $G$ as $\widehat{d}(e,f) = \min \lbrace d(a,x), d(a,y), d(b,x), d(b,y) \rbrace $. However, the distance between two edges of graph $G$ can also be defined as the distance between vertices $e$ and $f$ in the line graph $L(G)$. In \cite{iranmanesh-2009} it was discussed that the pair $(E(G), \widehat{d})$ is not a metric space and therefore it was suggested that the distance between pairs of edges should be considered in the line graph. Hence, in the present paper we use the last definition of distance between edges for the edge-Hosoya polynomial $H_e(G,x)$. The other version of the edge-Hosoya polynomial using distance $\widehat{d}$ is denoted by $\widehat{H}_e(G,x)$. In \cite{behm} the relation between both versions of the edge-Hosoya polynomial was established, but it is not completely correct. In the present paper the true relation is proved in Proposition \ref{popravek}. Moreover, the relation between the Hosoya polynomial and the edge-Hosoya polynomial for trees was established in \cite{tr-zi}.
\bigskip

\noindent
Next, we describe the strong connections of the edge-Hosoya polynomial to the edge-Wiener index and the edge-hyper-Wiener index, which show the importance of the edge-Hosoya polynomial in chemistry. The \textit{edge-Wiener index} \cite{iranmanesh-2009} and the \textit{edge-hyper-Wiener index} \cite{edge-hyper} of a connected graph $G$ are defined as
$$W_e(G) = \frac{1}{2} \sum_{e \in E(G)} \sum_{f \in E(G)} d(e,f),$$
$$WW_e(G) = \frac{1}{4} \sum_{e \in E(G)} \sum_{f \in E(G)} d(e,f) + \frac{1}{4} \sum_{e \in E(G)} \sum_{f \in E(G)} d(e,f)^2$$
and have been much investigated in recent years (for example, see \cite{chen,cre-trat1,kelenc,knor,tratnik}). The equality 
\begin{equation}\label{ew}
W_e(G) = H_e'(G,1)
\end{equation}
between the edge-Wiener index and the edge-Hosoya polynomial is obvious and the equality between the edge-hyper-Wiener index and the edge-Hosoya polynomial was shown in \cite{tr-zi}:

\begin{equation} \label{ehw}
WW_e(G) = H_e'(G,1) + \frac{1}{2}H_e''(G,1).
\end{equation}

\noindent
The main aim of the paper is deriving the recursive relations of the edge-Hosoya polynomial for benzenoid chains. This continues the research from \cite{gut}, where similar methods were used for determining the Hosoya polynomial. However, in our case some additional recursions are needed. In the next section we introduce basic concepts and the relation between $H_e(G,x)$ and $\widehat{H}_e(G,x)$ is established. In sections \ref{anne} and \ref{chains} the recursive relations for the edge-Hosoya polynomial of benzenoid chains are calculated and solved for polyacenes in the final section. In a similar way, the edge-Hosoya polynomial can be computed for any benzenoid chain.

\section{Preliminaries}

\textit{Distance} $d(x,y)$ between vertices $x,y \in V(G)$ is defined as the usual shortest-path distance. If $G$ is a connected graph and if $d_v(G,k)$ 
is the number of (unordered) pairs of its vertices that are at distance $k$\,, 
then the \textit{Hosoya polynomial} of $G$ is defined as
$$
H(G,x) = \sum_{k \geq 0} d_v(G,k)\,x^k.
$$

The \textit{distance} between edges $e$ and $f$ of a graph $G$, $d(e,f)$, is defined as the distance between vertices $e$ and $f$ in the line graph $L(G)$. The distance between a vertex $w$ and an edge $e=uv$ is defined as $d(w,e) = \min \{d(w,u), d(w,v) \}$.

\noindent
If $G$ is a connected graph with $m$ edges, and if $d_e(G,k)$ 
is the number of (unordered) pairs of its edges that are at distance $k$\,, 
then the \textit{edge-Hosoya polynomial} of $G$ is 
$$
H_e(G,x) = \sum_{k \geq 0} d_e(G,k)\,x^k.
$$
For convenience we write $d(G,k)$ for $d_e(G,k)$ and set $d(G,k)=0$ for $k < 0$. Note that $d(G,0) = m$.  Also, the following proposition is obvious.
\begin{proposition}
Let $G$ be a connected graph. Then
$$H_e(G,x) = H(L(G),x).$$
\end{proposition}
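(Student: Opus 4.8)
The plan is to unwind the definitions and match the coefficients of the two polynomials term by term. The crucial observation is that the vertex set of the line graph is, by construction, the edge set of $G$; that is, there is a canonical bijection $\varphi \colon E(G) \to V(L(G))$ sending each edge $e$ of $G$ to the corresponding vertex $e$ of $L(G)$. First I would record that the edge-distance $d(e,f)$ appearing in the definition of $H_e(G,x)$ is, by the very definition given in the Preliminaries, nothing other than the ordinary shortest-path distance between $\varphi(e)$ and $\varphi(f)$ in $L(G)$. So the map $\varphi$ is not merely a bijection of ground sets but an isometry from the edge-distance space of $G$ onto the vertex-distance space of $L(G)$.

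Next I would compare the coefficients directly. For a fixed $k \geq 0$, the coefficient $d(G,k) = d_e(G,k)$ counts the unordered pairs $\{e,f\}$ of edges of $G$ with $d(e,f) = k$, while the coefficient $d_v(L(G),k)$ counts the unordered pairs $\{u,w\}$ of vertices of $L(G)$ with $d(u,w) = k$. Since $\varphi$ is a bijection that preserves the distance by the previous step, it carries the unordered edge-pairs of $G$ at distance $k$ bijectively onto the unordered vertex-pairs of $L(G)$ at distance $k$. Hence $d_e(G,k) = d_v(L(G),k)$ for every $k$, and summing $x^k$ against these equal coefficients yields $H_e(G,x) = H(L(G),x)$.

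The argument presents no genuine obstacle, since it is essentially a rewriting of one definition in terms of another; the only point deserving a word of care is the well-definedness of the distances. Because $G$ is assumed connected, $L(G)$ is connected as well (any path in $G$ induces a walk in $L(G)$ through the successive edges), so every distance $d(u,w)$ in $L(G)$ is finite and each pair contributes to exactly one coefficient. This guarantees that both sides are honest polynomials with matching finite support, and with that remark in place the coefficientwise equality completes the proof.
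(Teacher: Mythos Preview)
Your argument is correct and is exactly the unwinding of definitions that the paper has in mind; the paper in fact gives no proof at all, stating only that the proposition is obvious. Your extra care about connectedness of $L(G)$ and the $k=0$ coefficient is sound but not strictly needed, since the equality $d_e(G,k)=d_v(L(G),k)$ already follows termwise from the fact that $d(e,f)$ is \emph{defined} as the distance in $L(G)$.
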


On the other hand, for edges $e = ab$ and $f = xy$ of a graph $G$ it is also legitimate to set
\begin{equation*}
\label{eq:hat-d}
\widehat{d}(e,f) = \min \lbrace d(a,x), d(a,y), d(b,x), d(b,y) \rbrace\,.
\end{equation*}
Replacing $d$ with $\widehat{d}$, a variant of the edge-Hosoya polynomial from~\cite{behm} is obtained, let us denote it with $\widehat{H}_e(G,x)$. The following proposition gives us the relation between both variants of the edge-Hosoya polynomial. Similar result was also obtained in \cite{behm} as Theorem $1$, but there it is not completely correct.

\begin{proposition}
\label{popravek}
Let $G$ be a connected graph. Then 
$$H_e(G,x) = x\left(\widehat{H}_e(G,x) - |E(G)|\right) + |E(G)|.$$
\end{proposition}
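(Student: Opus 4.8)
The plan is to reduce this polynomial identity to a single pointwise relation between the two edge-distances, namely that $d(e,f) = \widehat{d}(e,f) + 1$ for every pair of \emph{distinct} edges $e,f$. First I would record that both polynomials carry the same constant term. The coefficient $d(G,0) = |E(G)|$ of $H_e(G,x)$ comes from the $m$ pairs $\{e,e\}$, for which $d(e,e) = 0$; likewise each such pair satisfies $\widehat{d}(e,e) = 0$ (since $d(a,a) = 0$ for $e = ab$), so these contribute $|E(G)|$ to $\widehat{H}_e(G,x)$ as well. Hence it suffices to control the contribution of the unordered pairs of distinct edges.

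Next I would prove the key lemma. Write $e = ab$ and $f = xy$ with $e \neq f$, and set $t = \widehat{d}(e,f)$. For the bound $d(e,f) \le t+1$, I would choose endpoints realizing the minimum, say $d(b,x) = t$, and take a shortest $G$-path $b = v_0, v_1, \dots, v_t = x$. Its consecutive edges $v_0v_1, \dots, v_{t-1}v_t$ form a walk of length $t$ in $L(G)$; since $e$ shares the vertex $b = v_0$ with $v_0v_1$ and $f$ shares the vertex $x = v_t$ with $v_{t-1}v_t$, prepending $e$ and appending $f$ yields a walk $e, v_0v_1, \dots, v_{t-1}v_t, f$ of length $t+1$ in $L(G)$, so $d(e,f) \le t+1$. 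When $t=0$ the edges share a vertex and $d(e,f) = 1$, which matches the formula.

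For the reverse bound $d(e,f) \ge t+1$, let $e = e_0, e_1, \dots, e_k = f$ be a shortest path in $L(G)$, so $k = d(e,f)$. For each $i$ I would pick a vertex $w_i$ common to $e_{i-1}$ and $e_i$. Then $w_1$ is an endpoint of $e$ and $w_k$ is an endpoint of $f$, while $w_i$ and $w_{i+1}$ are both endpoints of $e_i$ and hence satisfy $d(w_i, w_{i+1}) \le 1$. Summing along the walk gives $d(w_1, w_k) \le k-1$, and therefore $\widehat{d}(e,f) \le d(w_1, w_k) \le k-1 = d(e,f) - 1$. Combining the two bounds proves $d(e,f) = \widehat{d}(e,f) + 1$ for distinct edges; I expect this lower bound, with its careful choice of the vertex walk $w_1, \dots, w_k$, to be the only genuinely delicate step.

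Finally I would assemble the identity. Writing $P$ for the set of unordered pairs of distinct edges, $H_e(G,x) = |E(G)| + \sum_{\{e,f\} \in P} x^{d(e,f)} = |E(G)| + \sum_{\{e,f\} \in P} x^{\widehat{d}(e,f)+1} = |E(G)| + x \sum_{\{e,f\} \in P} x^{\widehat{d}(e,f)}$. Since $\sum_{\{e,f\} \in P} x^{\widehat{d}(e,f)} = \widehat{H}_e(G,x) - |E(G)|$ by the constant-term observation above, this is exactly $H_e(G,x) = x\left(\widehat{H}_e(G,x) - |E(G)|\right) + |E(G)|$, as claimed.
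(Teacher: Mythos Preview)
Your proof is correct and rests on the same key fact as the paper's, namely that $d(e,f)=\widehat{d}(e,f)+1$ for every pair of distinct edges; the paper simply asserts this relation and then matches coefficients of $x^k$ for $k=0$, $k=1$, and $k>1$ separately, whereas you prove the relation and assemble the identity in one stroke by splitting off the diagonal pairs. In that sense your argument is a more complete version of the paper's own proof.
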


\begin{proof}
If $k= 0$, then it is obvious that $d(G,0) = |E(G)|$. Let $k=1$. The number $d(G,1)$ is equal to the number of pairs of edges $e$ and $f$ such that $d(e,f)=1$. However, it follows that $d(G,1)$ is the number of pairs of edges $e$ and $f$ for which $\widehat{d}(e,f)=0$, reduced by the number of edges of $G$, $|E(G)|$. Therefore, the coefficient before $x^1$ is the same on the both sides.
Finally, let $k>1$. Then $d(G,k)$ is equal to the number of pairs of edges $e$ and $f$ for which $d(e,f) = \widehat{d}(e,f) + 1 =k $, therefore, the coefficient before $x^k$ in $H_e(G,x)$ is equal to the coefficient before $x^{k-1}$ in $\widehat{H}_e(G,x)$. The proof is complete. \qed
\end{proof}
\smallskip


Finally, we need some additional definitions. For a graph $G$\,, $k\geq 0$\,, and a vertex $v \in V(G)$\,, let 
$d(G,v,k)$ be the number of edges of $G$ at distance $k$ from 
$v$\,. This time, $d(G,v,0)=deg(v)$\,, and for $k<0$ we set 
$d(G,v,k)=0$\,. We now define $H_e(G,v,x)$ as 
$$
H_e(G,v,x)=\sum_{k\geq 0} d(G,v,k) x^k \ .
$$

\noindent
For a graph $G$\,, $k\geq 0$\,, and an edge $e \in E(G)$\,, let 
$d(G,e,k)$ be the number of edges of $G$ at distance $k$ from 
$e$\,. This time, $d(G,e,0)=1$\,, and for $k<0$ we set 
$d(G,e,k)=0$\,. We now define $H_e(G,e,x)$ as 
$$
H_e(G,e,x)=\sum_{k\geq 0} d(G,e,k) x^k \ .
$$

\section{Annelating a $6$-cycle}
\label{anne}

In this section we describe how the edge-Hosoya polynomial of a graph $G_0$ can be used to compute the edge-Hosoya polynomial of a graph $G$, obtained by attaching a $6$-cycle to $G_0$. In chemistry, such an operation is known as \textit{annelation}. The obtained result will be used in the next section, where we consider benzenoid chains. 

\begin{figure}[!htb]
	\centering
		\includegraphics[scale=0.7, trim=0cm 0cm 1cm 0cm]{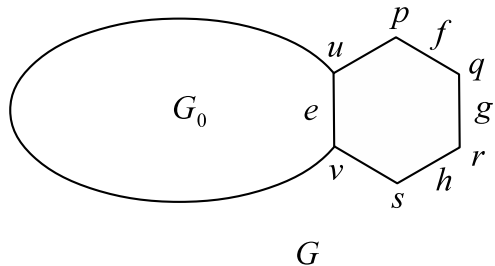}
\caption{A graph $G$ obtained by annelating a $6$-cycle to $G_0$ over an edge $e=uv$.}
	\label{dod}
\end{figure}

\begin{theorem}
\label{rekurzija}
Let the graph $G$ be obtained by annelating a $6$-cycle to the graph $G_0$ over an edge $e=uv$. Then
\begin{eqnarray*}
H_e(G,x) & = & H_e(G_0,x) + (x+x^2)H_e(G_0,u,x) + (x+x^2)H_e(G_0,v,x) \\
 & + & x^2H_e(G_0,e,x) + 5 + 4x + 3x^2 + 3x^3.
\end{eqnarray*}

\end{theorem}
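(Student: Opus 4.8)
The plan is to classify the unordered pairs of edges of $G$ according to where their members lie and to reduce every distance in $G$ to a distance in $G_0$. First I would fix notation: label the four new vertices of the annelated hexagon so that its cyclic sequence is $u,v,w_1,w_2,w_3,w_4$, so that the five new edges are $f_1=vw_1$, $f_2=w_1w_2$, $f_3=w_2w_3$, $f_4=w_3w_4$, $f_5=w_4u$, while the sixth edge of the hexagon is the shared edge $e=uv\in E(G_0)$. I would then record the ``no shortcut'' observation: since $u$ and $v$ are already adjacent in $G_0$, any path that leaves $G_0$ through the hexagon and returns can be shortened (or left unchanged) by replacing its hexagon part with $e$; hence $d_G$ restricted to $V(G_0)$ coincides with $d_{G_0}$, and the distances among the hexagon edges are exactly the cyclic distances in the line graph of a $6$-cycle, which is again a $6$-cycle. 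Throughout I will use the identity $d_G(g,h)=\widehat{d}_G(g,h)+1$ for distinct edges (the relation underlying Proposition \ref{popravek}), which converts every edge--edge distance into a minimum of vertex--vertex distances over the four endpoint pairs.

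Next I would split $H_e(G,x)$ into three sums: pairs with both edges in $E(G_0)$, pairs with both edges among $f_1,\dots,f_5$, and the mixed pairs. By the no-shortcut observation the first sum is literally $H_e(G_0,x)$, the diagonal terms at distance $0$ contributing $|E(G_0)|$. For the second sum I would read the ten distances straight off the $6$-cycle $e,f_1,f_2,f_3,f_4,f_5$; together with the five diagonal terms this yields $5+4x+4x^2+2x^3$.

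The core is the mixed sum $\sum_{g\in E(G_0)}\sum_{i=1}^{5}x^{d_G(f_i,g)}$. For fixed $g=pq\in E(G_0)$ I would compute $d_G(w_j,p)$ for each hexagon vertex by comparing the two ways around the hexagon, finding that the route through the nearer of $u,v$ always wins; feeding this into $\widehat{d}_G(f_i,g)$ gives closed forms. I expect $d_G(f_1,g)=1+d_{G_0}(v,g)$ and $d_G(f_2,g)=2+d_{G_0}(v,g)$, so that $f_1,f_2$ contribute $(x+x^2)H_e(G_0,v,x)$, and symmetrically $d_G(f_5,g)=1+d_{G_0}(u,g)$, $d_G(f_4,g)=2+d_{G_0}(u,g)$, contributing $(x+x^2)H_e(G_0,u,x)$. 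The delicate edge is the central one $f_3$: here both endpoints $u,v$ compete and the shortest route uses whichever of them is closer to $g$, giving $d_G(f_3,g)=3+\min\{d_{G_0}(u,g),\,d_{G_0}(v,g)\}$.

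To turn this last expression into $H_e(G_0,e,x)$ I would use $\min\{d_{G_0}(u,g),d_{G_0}(v,g)\}=\widehat{d}_{G_0}(e,g)=d_{G_0}(e,g)-1$, valid for $g\neq e$, so that $d_G(f_3,g)=2+d_{G_0}(e,g)$ for every $g\neq e$. This is where I expect the main (and only) subtlety: the formula fails for $g=e$, where $d_G(f_3,e)=3$ rather than $2+d_{G_0}(e,e)=2$. Isolating this single exceptional term shows that the $f_3$-contribution equals $x^2(H_e(G_0,e,x)-1)+x^3=x^2H_e(G_0,e,x)+x^3-x^2$. Finally I would add the three sums: the generating-function terms assemble into $H_e(G_0,x)+(x+x^2)H_e(G_0,u,x)+(x+x^2)H_e(G_0,v,x)+x^2H_e(G_0,e,x)$, while the leftover constants $5+4x+4x^2+2x^3$ from the hexagon pairs and the correction $x^3-x^2$ from the $g=e$ exception combine to $5+4x+3x^2+3x^3$, which completes the proof. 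The main obstacle is thus concentrated entirely in the treatment of the central edge $f_3$ and the bookkeeping of the $g=e$ exception.
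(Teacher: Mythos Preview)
Your proof is correct and follows essentially the same decomposition as the paper's: partition the unordered edge pairs into old--old, new--new, and mixed, and express the mixed distances as shifts of $d_{G_0}(u,\cdot)$, $d_{G_0}(v,\cdot)$, $d_{G_0}(e,\cdot)$. The paper carries this out at the level of the coefficients $d(G,k)$, asserting the key identity ``from the figure'' and absorbing the $g=e$ anomaly for the central edge $f_3$ silently into the additive constants $5,4,3,3$; you work directly with the generating functions and make that exception explicit via $d_G(f_3,e)=3\neq 2+d_{G_0}(e,e)$, which is arguably cleaner bookkeeping but the same argument.
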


\begin{proof}
Let $e$, $u$ and $v$ be as in the Figure \ref{dod} and $k$ an integer. We define $d(G_0,k,e,u,v)$ as the sum
$$d(G_0,k) + d(G_0,u,k-1) + d(G_0,u,k-2) + d(G_0,v,k-1) + d(G_0,v,k-2) + d(G_0,e,k-2).$$

\noindent
Obviously, it follows from Figure \ref{dod}, that for every $k\geq 0$ it holds
$$d(G,k) = d(G_0,k,e,u,v) + \left\{ \begin{array}{ll} 
0 \ ; & k > 3 \\ 
3 \ ; & k = 3 \\ 
3 \ ; & k = 2 \\
4 \ ; & k = 1 \\
5 \ ; & k = 0. \end{array} \right.$$

\noindent
Therefore, 
$$H_e(G,x) = \sum_{k \geq 0}d(G,k) = \sum_{k \geq 0}d(G_0,k,e,u,v) + 5 + 4x + 3x^2 + 3x^3,$$
which implies
\begin{eqnarray*}
H_e(G,x) & = & H_e(G_0,x) + xH_e(G_0,u,x) + x^2H_e(G_0,u,x) \\
 & + & xH_e(G_0,v,x) + x^2H_e(G_0,v,x) + x^2H_e(G_0,e,x)\\
& + & 5 + 4x + 3x^2 + 3x^3 
\end{eqnarray*}
and the proof is complete. \qed
\end{proof}

\noindent
In the following theorem we describe how the edge-Hosoya polynomials with a fixed vertex or an edge can be obtained recursively.
\begin{theorem}
\label{rek1}
Let $G$, $G_0$, $u$, $v$, $p$, $q$, $r$, $s$, $e$, $f$ and $g$ be as in Figure \ref{dod}. Then
\begin{enumerate}
\item $H_e(G,p,x) = xH_e(G_0,u,x) + 2 + x + 2x^2$,
\item $H_e(G,q,x) = x^2H_e(G_0,u,x) + 2 + 2x + x^2$,
\item $H_e(G,r,x) = x^2H_e(G_0,v,x) + 2 + 2x + x^2$,
\item $H_e(G,s,x) = xH_e(G_0,v,x) + 2 + x + 2x^2$,
\item $H_e(G,f,x) = x^2H_e(G_0,u,x) + 1 + 2x + x^2 + x^3$,
\item $H_e(G,g,x) = x^2H_e(G_0,e,x) + 1 + 2x + x^2 + x^3$,
\item $H_e(G,h,x) = x^2H_e(G_0,v,x) + 1 + 2x + x^2 + x^3$.
\end{enumerate}
\end{theorem}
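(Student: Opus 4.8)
The plan is to establish all seven identities by the same bookkeeping used in Theorem \ref{rekurzija}: split the edges of $G$ into the edges of $G_0$ and the five new hexagon edges $up,pq,qr,rs,sv$ (where, reading Figure \ref{dod}, $f=pq$, $g=qr$, $h=rs$ and the hexagon is $u\,p\,q\,r\,s\,v$), then for each fixed vertex or edge compute the distance to each class and read off the polynomial. Throughout I will use that $G_0$ is isometrically embedded in $G$ and that every shortest path in $G$ from a hexagon vertex to a vertex $a\in V(G_0)$ must leave the hexagon through $u$ or $v$, since these are the only vertices shared with $G_0$. Combined with the triangle inequality $|d(u,a)-d(v,a)|\le d(u,v)=1$, this forces the route from $p$ or $q$ (resp.\ from $r$ or $s$) to pass through $u$ (resp.\ through $v$) at a fixed entry length.

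First I would treat the vertex cases (items 1--4). The routing observation gives, for every edge $e'$ of $G_0$, the clean identities $d(p,e')=1+d(u,e')$ and $d(q,e')=2+d(u,e')$, which remain valid even for $e'=e=uv$ since $d(u,e)=0$ and indeed $d(p,e)=1$, $d(q,e)=2$; these produce the factors $xH_e(G_0,u,x)$ and $x^2H_e(G_0,u,x)$. A short finite computation of the distances from $p$ and from $q$ to the five new edges yields the distance multisets $\{0,0,1,2,2\}$ and $\{1,0,0,1,2\}$, i.e.\ the constant polynomials $2+x+2x^2$ and $2+2x+x^2$. Items 3 and 4 then follow by repeating the argument with the roles of $u$ and $v$ (and of $p,q$ and $s,r$) interchanged, the hexagon part being genuinely symmetric under the reflection of the $6$-cycle.

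Next I would handle the edge cases, invoking the relation $d(e_1,e_2)=\widehat d(e_1,e_2)+1$ for distinct edges from Proposition \ref{popravek} and evaluating $\widehat d$ between a hexagon edge and an edge $e'=ab$ of $G_0$ through the vertex distances already computed. For $f=pq$ this gives $d(f,e')=2+d(u,e')$ for \emph{every} edge of $G_0$, including $e=uv$ (where $d(f,e)=2=2+0$ is consistent), so item 5 is clean and item 7 is its mirror image under $u\leftrightarrow v$. The delicate point is item 6: for $g=qr$ one finds $\widehat d(g,e')=2+\widehat d(e,e')$, hence $d(g,e')=2+d(e,e')$ for every $e'\neq e$; but the conversion $d(e,e')=\widehat d(e,e')+1$ breaks at $e'=e$, where $d(e,e)=0$ while the true value is $d(g,e)=3$ rather than $2+0=2$.

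Accounting for this single defect at the shared edge $e$ is the main obstacle. I would write $x^2H_e(G_0,e,x)=x^2\bigl(\sum_{e'\neq e}x^{d(e,e')}\bigr)+x^2$, replace the spurious $x^2$ coming from $e$ itself by the correct $x^3$ (a net change of $-x^2+x^3$), and add the directly computed contribution of the five new edges, whose distances from $g=qr$ form the multiset $\{2,1,0,1,2\}$, i.e.\ $1+2x+2x^2$; these combine to the stated $1+2x+x^2+x^3$. Every remaining step is a finite distance tabulation inside the hexagon, so once the routing lemma and the single correction at $e$ are in place, all seven identities follow.
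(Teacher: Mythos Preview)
Your proof is correct and follows essentially the same strategy as the paper: split the edges of $G$ into the edges of $G_0$ and the five new hexagon edges, then for each fixed vertex or edge compute distances to both classes. The paper phrases everything at the level of the coefficients $d(G,\cdot,k)$ and simply writes down, case by case, formulas such as $d(G,g,k)=d(G_0,e,k-2)+(\text{small correction for }k\le 3)$, reading these off from Figure~\ref{dod} without further justification; your argument makes the routing through $u$ or $v$ explicit and, for item~6, isolates the single defect at $e'=e$ where $d(g,e)=3\neq 2+d(e,e)$ and shows how the $-x^2+x^3$ shift is exactly absorbed by the hexagon contribution $1+2x+2x^2$, yielding the stated $1+2x+x^2+x^3$. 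In the paper this cancellation is silently built into the correction constants, so the two proofs are the same computation presented at different levels of detail.
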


\begin{proof}
It suffices to prove cases $1.$, $2.$, $5.$ and $6.$, since the others are similar.
\begin{enumerate}
\item Obviously, it follows from Figure \ref{dod} that
$$d(G,p,k) = d(G_0,u,k-1) + \left\{ \begin{array}{ll} 
0 \ ; & k > 2 \\ 
2 \ ; & k = 2 \\ 
1 \ ; & k = 1 \\
2 \ ; & k = 0. \end{array} \right.$$

Hence,
$$H_e(G,p,x) = xH_e(G_0,u,x) + 2 + x + 2x^2.$$

\item It follows from Figure \ref{dod} that
$$d(G,q,k) = d(G_0,u,k-2) + \left\{ \begin{array}{ll} 
0 \ ; & k > 2 \\ 
1 \ ; & k = 2 \\ 
2 \ ; & k = 1 \\
2 \ ; & k = 0. \end{array} \right.$$

Therefore,
$$H_e(G,q,x) = x^2H_e(G_0,u,x) + 2 + 2x + x^2.$$

\item This case is similar to case $2.$

\item This case is similar to case $1.$

\item It follows from Figure \ref{dod} that
$$d(G,f,k) = d(G_0,u,k-2) + \left\{ \begin{array}{ll} 
0 \ ; & k > 3 \\
1 \ ; & k = 3 \\
1 \ ; & k = 2 \\ 
2 \ ; & k = 1 \\
1 \ ; & k = 0. \end{array} \right.$$

This implies
$$H_e(G,f,x) = x^2H_e(G_0,u,x) + 1 + 2x + x^2 + x^3.$$

\item It follows from Figure \ref{dod} that
$$d(G,g,k) = d(G_0,e,k-2) + \left\{ \begin{array}{ll} 
0 \ ; & k > 3 \\
1 \ ; & k = 3 \\
1 \ ; & k = 2 \\ 
2 \ ; & k = 1 \\
1 \ ; & k = 0. \end{array} \right.$$

So,
$$H_e(G,g,x) = x^2H_e(G_0,e,x) + 1 + 2x + x^2 + x^3.$$

\item This case is similar to case $5.$
\end{enumerate}
\qed
\end{proof}

\section{Application to benzenoid chains}
\label{chains}

A {\em benzenoid chain} with $h$ hexagons is a graph defined recursively as follows. 
If $h=1$ then $B'$ is the cycle on six vertices. For $h>1$ we obtain
$B$ from a benzenoid chain $B'$ with $h-1$ hexagons by attaching the $h$th
hexagon along an edge $e$ of the $(h-1)$st hexagon, where the end-vertices 
of $e$ are of degree 2 in the hexagonal chain $B'$. 

We now apply the previous recursive relations to benzenoid chains. 
Let $B_h$ be a benzenoid chain with $h$ hexagons obtained by 
adding a 6-cycle to $B_{h-1}$ over an edge $u_{h-1}v_{h-1}$\,. 
Then by Theorem \ref{rekurzija} we have
\begin{eqnarray*}
H_e(B_h,x) & = & H_e(B_{h-1},x)+(x+x^2)\Bigl[ 
H_e(B_{h-1},u_{h-1},x)+
             H_e(B_{h-1},v_{h-1},x)\Bigr] \\
         &  & + x^2H_e(B_{h-1},e_{h-1},x) +5+4x+3x^2+3x^3\,.
\end{eqnarray*}
Furthermore, let $u_hv_h$ be the edge that will be used in the 
subsequent annelation, that is, in the process $B_h \to 
B_{h+1}$\,. There are three possibilities for the edge
$u_hv_h$ and these are shown in Figure \ref{moznosti}.

\begin{figure}[!htb]
	\centering
		\includegraphics[scale=0.8, trim=0cm 0cm 1cm 0cm]{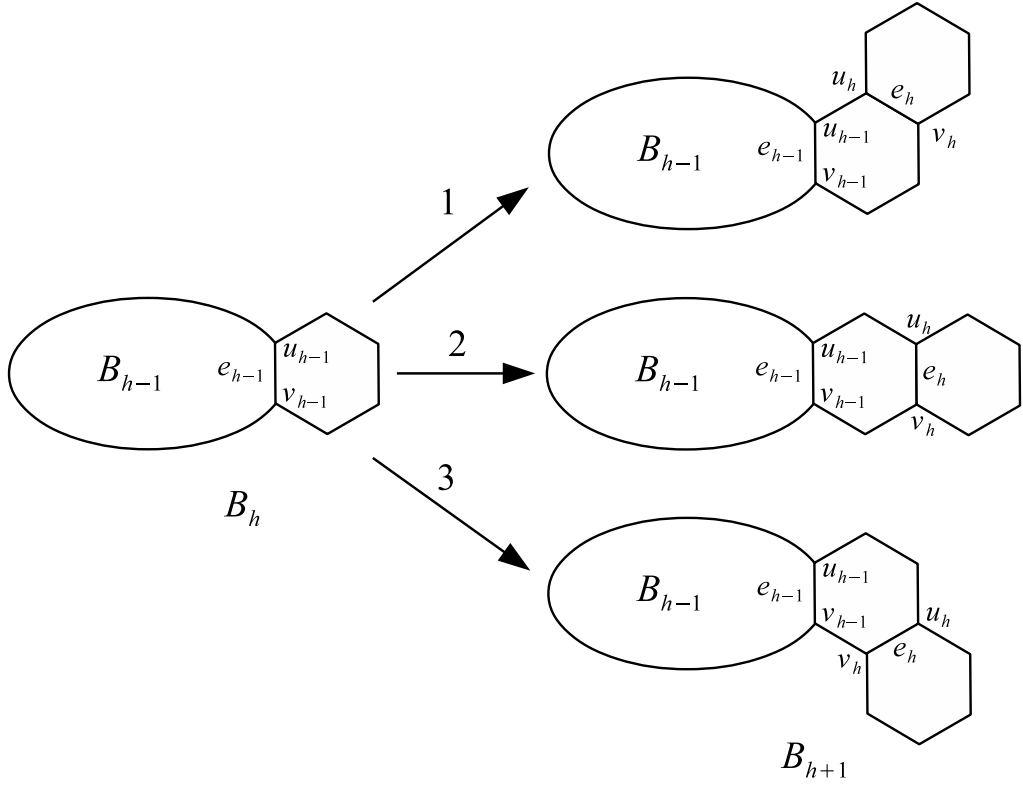}
\caption{Three possible cases of annelating a $6$-cycle to a hexagonal chain.}
	\label{moznosti}
\end{figure}

\noindent
For these three cases Theorem \ref{rek1} implies: 

\medskip\noindent
{\bf Case 1:}
\begin{eqnarray*}
H_e(B_h,u_h,x) & = & xH_e(B_{h-1},u_{h-1},x) + 2 + x + 2x^2\,, \\
H_e(B_h,v_h,x) & = & x^2H_e(B_{h-1},u_{h-1},x) + 2 + 2x + x^2\,, \\
H_e(B_h,e_h,x) & = & x^2H_e(B_{h-1},u_{h-1},x) + 1 + 2x + x^2 + x^3\,.
\end{eqnarray*}    

\noindent
{\bf Case 2:}
\begin{eqnarray*}
H_e(B_h,u_h,x) & = & x^2H_e(B_{h-1},u_{h-1},x) + 2 + 2x + x^2\,, \\
H_e(B_h,v_h,x) & = & x^2H_e(B_{h-1},v_{h-1},x) + 2 + 2x + x^2\,, \\
H_e(B_h,e_h,x) & = & x^2H_e(B_{h-1},e_{h-1},x) + 1 + 2x + x^2 + x^3\,.
\end{eqnarray*} 

\noindent
{\bf Case 3:}
\begin{eqnarray*}
H_e(B_h,u_h,x) & = & x^2H_e(B_{h-1},v_{h-1},x) + 2 + 2x + x^2\,, \\
H_e(B_h,v_h,x) & = & xH_e(B_{h-1},v_{h-1},x) + 2 + x + 2x^2\,, \\
H_e(B_h,e_h,x) & = & x^2H_e(B_{h-1},v_{h-1},x) + 1 + 2x + x^2 + x^3\,.
\end{eqnarray*} 

\noindent
We write the above recurrences in a more concise form by 
setting $\alpha_h \equiv H_e(B_h,x)$\,, $\beta_h \equiv 
H_e(B_h,u_h,x)$\,, $\gamma_h \equiv H_e(B_h,v_h,x)$\,, and $\delta_h \equiv H_e(B_h,e_h,x)$. Then we 
obtain:

\noindent
\begin{theorem} \label{gla}
Let $B_h$ be a benzenoid chain with $h$ 
hexagons. Then the edge-Hosoya polynomial $\alpha_h$ of $B_h$ satisfies 
the following recurrence 
$$
\alpha_h=\alpha_{h-1}+(x+x^2)(\beta_{h-1}+\gamma_{h-1}) + x^2\delta_{h-1}
+5+4x+3x^2+3x^3 \ ,
$$
where 
$\alpha_0 = \beta_0 = \gamma_0 = \delta_0=1$\,. Moreover, 
$\beta_h$, $\gamma_h$ and $\delta_h$ obey the following recurrences, 
depending on the cases shown in Figure \ref{moznosti}:

\noindent
{\bf Case 1:}
$$\beta_h  =  x\beta_{h-1} + 2 + x + 2x^2, \quad \gamma_h  =  x^2\beta_{h-1} + 2 + 2x + x^2, \quad \delta_h  =  x^2\beta_{h-1} + 1 + 2x + x^2 + x^3.$$

\noindent
{\bf Case 2:}
$$\beta_h  =  x^2\beta_{h-1} + 2 + 2x + x^2, \quad \gamma_h  =  x^2\gamma_{h-1} + 2 + 2x + x^2, \quad \delta_h  =  x^2\delta_{h-1} + 1 + 2x + x^2 + x^3.$$

\noindent
{\bf Case 3:}
$$\beta_h  =  x^2\gamma_{h-1} + 2 + 2x + x^2, \quad \gamma_h  =  x\gamma_{h-1} + 2 + x + 2x^2, \quad \delta_h  =  x^2\gamma_{h-1} + 1 + 2x + x^2 + x^3.$$

\end{theorem}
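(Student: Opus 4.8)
The plan is to recognize that Theorem \ref{gla} is not a genuinely new result but a repackaging of Theorems \ref{rekurzija} and \ref{rek1} in the abbreviated notation $\alpha_h,\beta_h,\gamma_h,\delta_h$, supplemented by a verification of the base case. First I would fix the meaning of the index $h=0$: the chain $B_0$ is a single edge $u_0v_0$, namely the graph to which the first hexagon is annelated to produce $B_1$. Since $B_0$ has exactly one edge, $d(B_0,0)=1$ and there are no edge pairs at positive distance, so $\alpha_0=1$; likewise $d(B_0,u_0,0)=\deg(u_0)=1$ with no farther edges gives $\beta_0=1$, and the same reasoning yields $\gamma_0=1$ and $\delta_0=1$. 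As a sanity check one may feed these values into the $\alpha$-recurrence and confirm that it reproduces $H_e(B_1,x)=H(C_6,x)=6+6x+6x^2+3x^3$.

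For the recurrence governing $\alpha_h$, I would simply specialize Theorem \ref{rekurzija} to $G=B_h$, $G_0=B_{h-1}$ and $e=u_{h-1}v_{h-1}$. Rewriting $H_e(B_{h-1},x)$, $H_e(B_{h-1},u_{h-1},x)$, $H_e(B_{h-1},v_{h-1},x)$ and $H_e(B_{h-1},e_{h-1},x)$ as $\alpha_{h-1},\beta_{h-1},\gamma_{h-1},\delta_{h-1}$ turns the displayed identity of Theorem \ref{rekurzija} verbatim into the claimed recurrence for $\alpha_h$. No computation beyond this substitution is required.

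The substance of the argument lies in the three-way split for $\beta_h,\gamma_h,\delta_h$. The point is that when the hexagon producing $B_{h+1}$ is attached to $B_h$, the edge $u_hv_h$ on which it is glued may sit in one of three positions relative to the previous gluing edge $u_{h-1}v_{h-1}$, and these are exactly the configurations drawn in Figure \ref{moznosti}. For each configuration I would identify $u_h$, $v_h$ and $e_h$ with the appropriate labelled edges $p,q,r,s,f,g,h$ of the freshly added hexagon in Figure \ref{dod}, and then read off the corresponding line of Theorem \ref{rek1}. Concretely, in Case $1$ one takes $u_h=p$, $v_h=q$, $e_h=f$, all anchored at the vertex $u_{h-1}$; in Case $2$ one takes $u_h=q$, $v_h=r$, $e_h=g$, anchored at $u_{h-1}$, $v_{h-1}$ and $e_{h-1}$ respectively; and in Case $3$ one takes $u_h=r$, $v_h=s$, $e_h=h$, all anchored at $v_{h-1}$. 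Substituting these identifications into items 1--7 of Theorem \ref{rek1} produces precisely the nine recurrences listed.

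I expect the only real obstacle to be the geometric bookkeeping: correctly matching, in each of the three cases of Figure \ref{moznosti}, the new anchor vertices $u_h,v_h$ and anchor edge $e_h$ to the symbols $p,q,r,s,f,g,h$ of Figure \ref{dod}, while respecting the orientation of the attached hexagon so that the multiplying powers of $x$ and the constant polynomials come out as stated. Once this correspondence is pinned down, every recurrence is an immediate instance of an already-proved identity, so no further estimation or algebra is needed.
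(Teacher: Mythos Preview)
Your proposal is correct and matches the paper's approach exactly: the paper treats Theorem \ref{gla} as a direct consequence of Theorems \ref{rekurzija} and \ref{rek1}, merely rewriting the displayed identities in the abbreviated notation $\alpha_h,\beta_h,\gamma_h,\delta_h$ without giving a separate proof. Your explicit identification of $B_0$ as a single edge and the matching of $u_h,v_h,e_h$ with $p,q,r,s,f,g,h$ in each of the three cases fills in precisely the bookkeeping the paper leaves implicit.
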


\section{Closed formula for polyacenes}

A hexagon $r$ of a benzenoid chain that is adjacent to two other
hexagons (that is, an {\it inner hexagon}) contains two vertices of degree 
two. We say that $r$ is {\em linearly connected} if its two vertices of
degree two are not adjacent. A benzenoid chain is called a {\it linear benzenoid chain} or a {\it polyacene} if every inner hexagon is linearly connected.

In this section we use Theorem \ref{gla} to obtain the closed formulas for the edge-Hosoya polynomial of polyacenes. Polyacene with $h$, $h \geq 1$, hexagons will be denoted by $L_h$, see Figure \ref{L4}.

\begin{figure}[!htb]
	\centering
		\includegraphics[scale=0.8, trim=0cm 0cm 0cm 0cm]{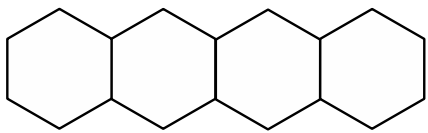}
\caption{Linear benzenoid chain (polyacene) $L_4$.}
	\label{L4}
\end{figure}

One can notice that at every step of annelation we apply Case 2 of Theorem \ref{gla}. The solutions of recursions for $\beta_h, \gamma_h$, and $\delta_h$ are:

$$\beta_h = \gamma_h = \frac{x^{2h}(2x^2+2x+1) - x^2 - 2x - 2}{x^2-1},$$

$$\delta_h = \frac{x^{2h}(x^3 + 2x^2+2x) - -x^3 - x^2 - 2x - 1}{x^2-1}.$$

\noindent
Finally, solving the recursion for $\alpha_h$ we obtain the edge-Hosoya polynomial of $L_h$:

\begin{eqnarray*}
H_e(L_h,x) & = & \frac{x^{2h+5} + 6x^{2h+4}  + 10x^{2h+3} + 6x^{2h+2} + 2x^{2h+1}}{(x^2-1)^2} \\
& + & \frac{2hx^7 - (9h+1)x^5 -(7h+5)x^4 - (h+10)x^3 + 2(h-4)x^2 + 2(4h-1)x + 5h+1}{(x^2-1)^2}.
\end{eqnarray*}

By the obtained result and Equations \eqref{ew},\eqref{ehw} it is possible to compute the closed formulas for the edge-Wiener index and the edge-hyper-Wiener index of polyacenes. However, these formulas were found in \cite{khormali,edge-hyper} and corrected in \cite{tratnik}.
\section*{Acknowledgments}

The author Petra \v Zigert Pleter\v sek acknowledge the financial support from the Slovenian Research Agency, research core funding No. P1-0297.

The author Niko Tratnik was financially supported by the Slovenian Research Agency.

\noindent

\baselineskip=17pt

\end{document}